\newtheorem{theorem}{Theorem}
\newtheorem{lemma}[theorem]{Lemma}
\newtheorem{remark}[theorem]{Remark}
\theoremstyle{definition}
\newtheorem{example}[theorem]{Example}
\newcommand\eps{\varepsilon}
\renewcommand\le{\leqslant}
\renewcommand\ge{\geqslant}
\renewcommand\varrho{\rho}
\newcommand\RR{{\mathbb R}}
\newcommand\E{\operatorname{\mathbb E{}}}
\renewcommand\Pr{\operatorname{\mathbb P{}}}
\newcommand\Var{\operatorname{Var}}
\newcommand\Bin{\operatorname{Bin}}
\newcommand\floor[1]{\lfloor #1 \rfloor}
\newcommand\ceil[1]{\lceil #1 \rceil}
\newcommand\set[1]{\ensuremath{\{#1\}}}
\newcommand\Bigset[1]{\ensuremath{\Bigl\{#1\Bigr\}}}
\newcommand\bigpar[1]{\bigl(#1\bigr)}
\newcommand\Bigpar[1]{\Bigl(#1\Bigr)}
\newcommand\biggpar[1]{\biggl(#1\biggr)}
\newcommand\bigsqpar[1]{\bigl[#1\bigr]}
\newcommand\Bigsqpar[1]{\Bigl[#1\Bigr]}
\newcommand\abs[1]{|#1|}
\newcommand\bigabs[1]{\bigl|#1\bigr|}
\newcommand\Bigabs[1]{\Bigl|#1\Bigr|}
\newcommand\oY{{\overline Y}}
\newcommand\oy{{\overline y}}
\newcommand\ty{{\tilde{y}}}
\newcommand\cB{{\mathcal B}}
\newcommand\cD{{\mathcal D}}
\newcommand\cE{{\mathcal E}}
\newcommand\cF{{\mathcal F}}
\newcommand\cG{{\mathcal G}}
\newcommand\cM{{\mathcal M}}
\newcommand{\indic}[1]{\mathbbm{1}_{\{{#1}\}}}
\newcommand\dd{\,\mathrm{d}}
\let\OLDthebibliography\thebibliography
\renewcommand\thebibliography[1]{
  \OLDthebibliography{#1}
  \setlength{\parskip}{0pt}
  \setlength{\itemsep}{0pt plus 0.3ex}
}
\begin{document}
\title{On Wormald's differential equation method}
\author{Lutz Warnke%
\thanks{School of Mathematics, Georgia Institute of Technology, Atlanta GA~30332, USA.
E-mail: {\tt warnke@math.gatech.edu}. Research partially supported by NSF Grant DMS-1703516 and a Sloan Research Fellowship.}}
\date{May~21, 2019; revised June~10, 2019} 
\maketitle

\begin{abstract}
This note contains a short and simple proof of Wormald's differential equation method  
(that yields slightly improved approximation guarantees and error probabilities).
This powerful method uses differential equations to approximate 
the time-evolution/dynamics of random processes and algorithms. 
\end{abstract}

\section{Introduction}
Oftentimes it is natural and useful to approximate the trajectories of a random process
by the solutions to differential equations 
(whose deterministic behaviour is easier to understand). 
This widely-used approach has a long history: 
it was pioneered around~1970 by~Kurtz~\cite{Kurtz1970,Kurtz1981} for continuous-time Markov chains,
and introduced to the computer science community in the early~1980s by~Karp and Sipser~\cite{KS1981}.
In combinatorics this approach was popularized in the~1990s by~Wormald~\cite{DEM,DEM99,DEM2003,Seierstad2009,DM}: 
he developed a general framework for applying the so-called \emph{differential equation method} 
to discrete-time randomized algorithms and random combinatorial structures, 
which in the late~2000s has undergone some further technical advances 
by~Bohman and~others~\cite{B2009,BK2010,W2014,RW2016,BFL2015,FPGM2013,BK2013}. 
To date, this powerful method remains the state-of-the-art for the analysis of many important 
randomized combinatorial algorithms (see, e.g.,~\cite{B2009,BK2010,BFL2015,FPGM2013,BK2013,BW2018}).

In this note we provide a short and simple proof of Wormald's differential equation method~\cite{DEM,DEM99}, 
obtaining slightly improved approximation guarantees and error probabilities. 
The organization is as follows. 
In the next two subsections we illustrate the flavour of the method, and motivate the core proof idea in a simpler toy setting. 
In Section~\ref{sec:main} we then state and prove Wormald's theorem (Theorem~\ref{thm:DEM} and Section~\ref{sec:proof}), 
and also discuss some useful extensions (Section~\ref{sec:ext}). 
The final Section~\ref{sec:final} contains some brief concluding~remarks.

\subsection{Flavour of the method}\label{sec:flavour}
The basic goal of the differential equation method is to `track' a collection of 
variables~$(Y_k(i))_{1 \le k \le a}$ associated to some discrete-time random process 
(e.g., in some $n$-vertex random graph process, $Y_k(i)$~might 
denote the number of vertices of degree~$k$ after~$i$ steps),  
and it provides a framework for showing that these random 
variables closely `follow' the 
solution~$(y_k(t))_{1 \le k \le a}$ of a corresponding 
system of differential equations. 
The flavour of applications is roughly as follows: 
if the one-step changes of these variables~satisfy 
\vspace{-0.125em}\begin{itemize}
\itemsep 0.125em \parskip 0em  \partopsep=0pt \parsep 0em 
	\item $\E\bigpar{Y_k(i+1)-Y_k(i) \mid Y_1(i), \ldots, Y_a(i)} = F_k\bigpar{i/n,Y_1(i)/n,...,Y_a(i)/n} + o(1)$, where the functions~$F_k$ are `well-behaved' (i.e., sufficiently smooth), and 
	\item $\bigabs{Y_k(i+1)-Y_k(i)}$  is never `too big' (in the worst case), 
\end{itemize}\vspace{-0.125em}
then the heuristic conclusion of 
the differential equation method (see Theorem~\ref{thm:DEM}) is that 
\vspace{-0.125em}\begin{itemize}
\itemsep 0.125em \parskip 0em  \partopsep=0pt \parsep 0em 
	\item  with high probability~$Y_k(tn)=y_k(t)n+o(n)$, where the deterministic functions~$(y_k(t))_{1 \le k \le a}$ are the unique solution to~$y'_k(t) = F_k\bigpar{t,y_1(t),...,y_a(t)}$ with~$y_k(0) = Y_k(0)/n$.
\end{itemize}\vspace{-0.125em}
In concrete words,  
this says that 
if we interpret the expected one-step difference equations
 of the random variables as differential equations, 
then the values of the rescaled random variables~$Y_k(tn)/n$ typically stay close to the deterministic solutions~$y_k(t)$~of the 
corresponding system of differential equations 
(so the~$y_k(t)$ are the deterministic `limiting objects' of the~$Y_k(tn)/n$). 
This also establishes a form of \emph{dynamic concentration}, 
since the variables~$(Y_k(i))_{1 \le k \le a}$ are sharply concentrated around their expected trajectories in each~step.

\pagebreak[2]

\subsection{Motivation: stability of differential equations}\label{sec:WDEM:intuition}   
It turns out that the statement and proof of the differential equation method (Theorem~\ref{thm:DEM}) 
can be motivated by `stability properties' of differential equations with Lipschitz properties.  
The relevant toy question is: 
how much can two collections of functions~$(y_k(t))_{1 \le k \le a}$ and~$(z_k(t))_{1 \le k \le a}$ 
differ if they have similar derivatives and initial values?
To be more precise, assume that for some small `perturbations'~$\lambda,\delta$ we have 
\begin{align}
\label{eq:yk}
y_k(0)=\hat{y}_k & \quad \text{ and } \quad \; y'_k(t)=F_k\bigpar{t,y_1(t), \ldots, y_a(t)},\\
\label{eq:zk}
\abs{z_k(0)-\hat{y}_k} \le \lambda \:\: & \quad \text{ and } \quad \bigabs{z'_k(t)-F_k\bigpar{t,z_1(t), \ldots, z_a(t)}} \le \delta, 
\end{align}
where the functions~$F_k$ are $L$-Lipschitz-continuous\footnote{A function~$f$ is said to be $L$-Lipschitz-continuous on~$\cD \subseteq \RR^{\ell}$ if~$|f(x)-f(x')| \le L  \cdot \max_{1 \le k \le \ell}|x_k-x'_k|$ holds for all points~$x =(x_1, \ldots, x_\ell)$ and~$x'=(x'_1, \ldots, x'_\ell)$ in $\cD$, where~$\max_{1 \le k \le \ell}|x_k-x'_k|$ is the~$\ell^{\infty}$-distance between~$x$ and~$x'$.} 
on some bounded domain~$\cD \subseteq \RR^{a+1}$ (i.e., $\cD$ is a connected open subset that is bounded). 
In stability theory of differential equations it is standard to compare such functions 
via Gronwall's inequality (see Appendix~\ref{sec:apx} for its simple proof). 
\begin{lemma}[Gronwall's inequality]\label{lem:gronwall:int}
Given a continuous function~$x(t)$ defined on~$[0,T]$, assume that there is~$L \ge 0$ 
such that~$x(t) \le C + L \int_{0}^t x(s) \dd s$ for~$t \in [0,T)$. 
Then~$x(t) \le C e^{L t}$ for~$t \in [0,T]$.   
\end{lemma}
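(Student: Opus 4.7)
The plan is to reduce the integral inequality to a differential one by introducing the auxiliary function $u(t) := C + L\int_0^t x(s)\dd s$ on $[0,T]$. Since $x$ is continuous, the fundamental theorem of calculus tells us that $u$ is differentiable with $u'(t)=Lx(t)$, and the hypothesis $x(t) \le u(t)$ on $[0,T)$ then transforms into the pointwise differential inequality $u'(t) \le L u(t)$ on $[0,T)$, with the explicit initial value $u(0)=C$.

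Next, I would multiply by the integrating factor $e^{-Lt}$: a direct computation shows
\[
\bigpar{u(t)e^{-Lt}}' \;=\; \bigpar{u'(t)-Lu(t)}e^{-Lt} \;\le\; 0 \qquad \text{for } t \in [0,T).
\]
Hence $u(t)e^{-Lt}$ is non-increasing on $[0,T)$, so $u(t)e^{-Lt} \le u(0) = C$, which rearranges to $u(t) \le C e^{Lt}$. Combining with $x(t) \le u(t)$ gives $x(t) \le Ce^{Lt}$ for all $t \in [0,T)$.

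To extend to the endpoint $t=T$, I would use continuity: since $x$ is continuous on the closed interval $[0,T]$ and $t \mapsto Ce^{Lt}$ is continuous everywhere, the inequality passes to the limit $t\to T^-$, yielding $x(T)\le Ce^{LT}$.

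There is no real obstacle here; the argument is a standard integrating-factor trick. The only point requiring a moment's care is that the hypothesis is stated on the half-open interval $[0,T)$ whereas the conclusion is claimed on $[0,T]$, but continuity of $x$ bridges this gap immediately. An alternative route (should one wish to avoid integrating factors) is to iterate the hypothesis $n$ times, producing the partial sum $C\sum_{k=0}^{n-1}(Lt)^k/k!$ plus a remainder that is bounded by $(LT)^n \cdot \max_{[0,T]}|x|/n!$ and thus vanishes as $n\to\infty$; this gives the same bound $Ce^{Lt}$ in the limit.
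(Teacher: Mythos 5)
Your proof is correct and uses essentially the same integrating-factor argument as the paper: you work with $u(t)=C+L\int_0^t x(s)\dd s$ and show $(u(t)e^{-Lt})'\le 0$, while the paper works with $y(t)=L\int_0^t x(s)\dd s$ and bounds $(y(t)e^{-Lt})'\le LCe^{-Lt}$, which is the same computation up to the shift by $C$. Your explicit continuity step at $t=T$ is a small tidy addition, but the substance is identical.
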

\noindent
Indeed, integrating the derivatives~$z'_k(t)$ and~$y'_k(t)$, after taking absolute values it follows that 
\begin{equation}\label{eq:error:int:0}
\begin{split}
|z_k(t)-y_k(t)| & \le |z_k(0)-y_k(0)| + \int_0^{t} |z'_k(s)-y'_k(s)| \dd s \\
& \le \lambda + \delta t +  \int_0^{t} \bigabs{F_k\bigpar{s,z_1(s), \ldots, z_a(s)}-F_k\bigpar{s,y_1(s), \ldots, y_a(s)}} \dd s . 
\end{split}
\end{equation}
Assuming for the moment that~$(t,z_1(t), \ldots, z_a(t))$ and~$(t,y_1(t), \ldots, y_a(t))$ 
are both still inside the domain~$\cD$ for all~$t \in [0,T)$, 
using that the functions~$F_k$ are $L$-Lipschitz-continuous on~$\cD$ it follows that 
\begin{equation}\label{eq:error:int:1}
\max_{1 \le k \le a} |z_k(t)-y_k(t)| \; \le \; (\lambda + \delta T) + L \int_0^{t} \max_{1 \le k \le a}|z_k(s)-y_k(s)| \dd s ,
\end{equation}
which by Gronwall's inequality (Lemma~\ref{lem:gronwall:int}) 
then implies for all~$t \in [0,T]$ the bound
\begin{equation}\label{eq:error:int}
\max_{1 \le k \le a} |z_k(t)-y_k(t)|  \le \big(\lambda + \delta T) e^{LT} .
\end{equation}
The punchline of the above argument is as follows: 
in order to understand the `approximate' solutions~$z_k(t)$ to the differential equation~\eqref{eq:yk}, 
it essentially suffices to understand the `exact' solutions~$y_k(t)$ on the domain~$\cD$.  
One snag is that, due to the error-term in~\eqref{eq:error:int}, 
it can happen that~$(t,z_1(t), \ldots, z_a(t)) \notin\cD$ despite $(t,y_1(t), \ldots, y_a(t)) \in \cD$. 
To overcome this obstacle, we intuitively remove all points from~$\cD$ which are `too close' to the boundary, 
so that~\eqref{eq:error:int} ensures~$(t, z_1(t), \ldots, z_a(t)) \in \cD$ in the remainder. 
The crux of this technical idea is that, by choosing~$\sigma \in [0,T]$ suitably, we can 
then ensure that the bound~\eqref{eq:error:int} holds for all~$t \in [0,\sigma]$.

Perhaps surprisingly, our upcoming proof of the differential equation method 
does little more than adapting the above comparison argument to the random setting 
(using concentration inequalities and a discrete variant of Gronwall's inequality). 
We hope that this viewpoint also clarifies the role of the somewhat technical 
parameter~$\sigma$ in Theorem~\ref{thm:DEM} below, which simply handles 
complications 
near the boundary of the~domain~$\cD$.

\section{Wormald's theorem}\label{sec:main}
We now state a \emph{non-asymptotic} version of the differential equation method  
which (together with Remark~\ref{rem:DEM:Glambda} and Lemma~\ref{lem:ext:trunc}) 
is slightly stronger than Wormald's original formulation~\cite{DEM,DEM99}.

\enlargethispage{\baselineskip}

The key difference lies in the (exponentially small) probability with which the conclusion~\eqref{dem:error} fails: 
in~\cite[Theorem~5.1]{DEM99} it goes to zero when~$n \lambda^3/\beta^3 \to \infty$, 
whereas in the Theorem~\ref{thm:DEM} below the weaker assumption~$n \lambda^2/\beta^2 \to \infty$ suffices 
(usually~$\lambda = o(1)$ and~$\beta=\Omega(1)$ hold). 
Besides better probability bounds, this also enables smaller `approximation errors' of the form~$O(\lambda n)$ in the conclusion~\eqref{dem:error} below.  
(For example, if we are as in~\cite[Section~4.2]{RWapbsr} aiming at error probabilities of form~$n^{-\omega(1)}$ when~$\beta=\Theta(1)$ and~$\delta=O(n^{-1})$, 
then~\cite[Theorem~5.1]{DEM99} requires~$\lambda n = \omega(1) \cdot n^{2/3}\sqrt[3]{\log n}$, 
whereas~$\lambda n = \omega(1) \cdot n^{1/2} \sqrt{\log n}$ suffices in Theorem~\ref{thm:DEM} below.)

In applications usually~$(\cF_i)_{i \ge 0}$ denotes the natural filtration of the underlying random~process, 
and then one can simply think of~$\cF_i$ as the~`history' which contains all information available during the first~$i$~steps. 
\pagebreak[2]
\begin{theorem}[Differential equation method]\label{thm:DEM}%
Given integers~$a,n \ge 1$, 
a bounded domain~$\cD \subseteq \RR^{a+1}$, 
functions~$(F_k)_{1 \le k \le a}$ with~$F_k:\cD \to \RR$, 
and $\sigma$-fields~$\cF_0 \subseteq \cF_1 \subseteq \cdots$, 
suppose that the random variables~$((Y_k(i))_{1 \le k \le a}$ are~$\cF_i$-measurable for~$i \ge 0$.   
Furthermore, assume that, for all~$i \ge 0$ and~$1 \le k \le a$, the following 
conditions hold whenever~$(i/n,Y_1(i)/n,...,Y_a(i)/n) \in \cD$: 
\vspace{-0.25em}%
\begin{enumerate}%
\itemsep 0.125em \parskip 0em  \partopsep=0pt \parsep 0em 
\item[(i)]\label{dem:trend}%
 $\bigabs{\E\bigpar{Y_k(i+1)-Y_k(i) \mid \cF_{i}}-F_k\bigpar{i/n,Y_1(i)/n,...,Y_a(i)/n}} \le \delta$, 
where the function~$F_k$ is~$L$-Lipschitz-continuous on~$\cD$ \ \emph{(the `Trend hypothesis' and `Lipschitz hypothesis')}, 
\item[(ii)]\label{dem:bounded}%
$\bigabs{Y_k(i+1)-Y_k(i)}\le \beta$ \ \emph{(the `Boundedness hypothesis')}, 
\end{enumerate}\vspace{-0.125em}%
and that the following condition holds initially: 
\vspace{-0.25em}%
\begin{enumerate}%
\itemsep 0.125em \parskip 0em  \partopsep=0pt \parsep 0em 
\item[(iii)]\label{dem:init}%
$\max_{1 \le k \le a} \bigabs{Y_k(0)- \hat{y}_k n} \le \lambda n$ for some~$(0,\hat{y}_1, \ldots, \hat{y}_a) \in \cD$ 
 \ \emph{(the `Initial condition')}. 
\end{enumerate}\vspace{-0.125em}%
Then there are~$R=R(\cD,(F_k)_{1 \le k \le a},L) \in [1,\infty)$ and~$T=T(\cD) \in (0,\infty)$ such that, 
whenever~$\lambda \ge \delta \min\{T,L^{-1}\} + R/n$,  
with probability at least $1-2a e^{-n\lambda^2/(8T \beta^2)}$ we have 
\begin{equation}\label{dem:error}
\max_{0 \le i \le \sigma n} \max_{1 \le k \le a}\bigabs{Y_k(i)-y_k\bigpar{\tfrac{i}{n}}n} \; < \; 3 e^{L T} \lambda n ,
\end{equation}
where~$(y_k(t))_{1 \le k \le a}$ is the unique solution to the system of differential equations
\begin{equation}\label{dem:sol}
y'_k(t) =F_k\bigpar{t,y_1(t), \ldots, y_a(t)} \quad \text{ with } \quad y_k(0) = \hat{y}_k \qquad \text{for~$1 \le k \le a$,}
\end{equation}
and~$\sigma=\sigma(\hat{y}_1, \ldots \hat{y}_a) \in [0,T]$ is any choice of~$\sigma \ge 0$ with the property\footnote{It suffices if~$\max_{1 \le k \le a}|\ty_k-y_k(t)| < 3 e^{LT}\lambda$ implies~$(t,\ty_1, \ldots, \ty_a) \in \cD$ for all~$t \in [0,\sigma)$ and~$(\ty_1, \ldots, \ty_a) \in \RR^a$.}  
that~$(t,y_1(t), \ldots y_a(t))$ has~$\ell^{\infty}$-distance at least~$3 e^{L T} \lambda$ 
from the boundary~of~$\cD$ for all~$t \in [0,\sigma)$.  
\end{theorem}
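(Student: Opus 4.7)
The plan is to adapt the deterministic stability argument of Section~\ref{sec:WDEM:intuition} to the random setting, replacing the deterministic perturbation bound with a martingale concentration inequality and using a discrete analogue of Gronwall's lemma.

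First I would decompose each increment $\Delta_k(j):=Y_k(j+1)-Y_k(j)$ into its predictable part plus a martingale difference, giving
$$Y_k(i)-Y_k(0) \;=\; \sum_{j=0}^{i-1}\E\bigsqpar{\Delta_k(j)\mid\cF_j} + M_k(i),$$
with $M_k(i)$ a martingale. Since the Trend and Lipschitz hypotheses are only assumed while the rescaled trajectory lies in $\cD$, I would introduce a stopping time~$\tau$ defined as the first step~$i$ at which either $(i/n,Y_1(i)/n,\ldots,Y_a(i)/n)\notin\cD$, or $\max_k|Y_k(i)-y_k(i/n)n|\ge 3e^{LT}\lambda n$, or $i\ge\sigma n$. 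The Boundedness hypothesis ensures that the stopped martingale $M_k(\cdot\wedge\tau)$ has differences of range at most~$2\beta$, so the sharp Azuma--Hoeffding inequality in maximal form (via Doob) combined with a union bound over~$1\le k\le a$ shows that the event
$$\cE \;:=\; \Bigcpar{\max_{k}\max_{i\le\sigma n}\bigabs{M_k(i\wedge\tau)} \le \lambda n/2}$$
holds with probability at least $1-2a e^{-n\lambda^2/(8T\beta^2)}$, matching the error bound in~\eqref{dem:error}.

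Second, on the event~$\cE$ I would run a Gronwall-type comparison with the ODE solution~\eqref{dem:sol} to deduce that actually $u(\tau)<3e^{LT}\lambda$ strictly, where $u(i):=\max_k|Y_k(i)/n-y_k(i/n)|$. For $i\le\tau$, the Trend hypothesis rewrites $\sum_{j<i}\E[\Delta_k(j)\mid\cF_j]$ as the Riemann sum $\sum_{j<i}F_k\bigpar{j/n,Y_1(j)/n,\ldots,Y_a(j)/n}$ up to an error at most $\delta i$, while a standard Riemann-to-integral comparison expresses $n(y_k(i/n)-\hat y_k)$ as $\sum_{j<i}F_k\bigpar{j/n,y_1(j/n),\ldots,y_a(j/n)}+O(1)$, with the $O(1)$ constant depending only on $\cD$, $L$, and $\max_\cD|F_k|$ and hence absorbable into the $R/n$ slack in the hypothesis on~$\lambda$. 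Subtracting these two identities, invoking the $L$-Lipschitz continuity of each~$F_k$, and using the bound $|M_k(i)|/n\le\lambda/2$ from~$\cE$ together with the Initial condition yields the discrete integral inequality
$$u(i) \;\le\; \lambda + \tfrac{\lambda}{2} + \delta\cdot\tfrac{i}{n} + \tfrac{R}{n} + \frac{L}{n}\sum_{j=0}^{i-1}u(j) \qquad (i\le\tau),$$
to which I would apply discrete Gronwall (or, equivalently, compare with the closed-form solution $(\tfrac{3\lambda}{2}+\delta/L)e^{Lt}-\delta/L$ of the associated linear ODE). Handling the two regimes $LT\le 1$ and $LT>1$ separately then yields $u(i)<3e^{LT}\lambda$; this case split is exactly what forces the appearance of $\min\{T,L^{-1}\}$ in the hypothesis $\lambda\ge\delta\min\{T,L^{-1}\}+R/n$.

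Finally, the strict inequality $u(\tau)<3e^{LT}\lambda$ rules out $\tau$ being triggered by the deviation threshold, and the defining property of~$\sigma$ (keeping $(t,y_1(t),\ldots,y_a(t))$ at $\ell^{\infty}$-distance $\ge 3e^{LT}\lambda$ from $\partial\cD$ for all $t<\sigma$) together with $u(\tau)<3e^{LT}\lambda$ also rules out $(\tau/n,Y_1(\tau)/n,\ldots,Y_a(\tau)/n)\notin\cD$ before step $\sigma n$. Hence on~$\cE$ we must have $\tau=\sigma n$, which is exactly~\eqref{dem:error}. I expect the main technical obstacle to be the bookkeeping in the Gronwall step: balancing the integral-form estimate $(\lambda+\delta T)e^{Lt}$ against the rate-form estimate $(\lambda+\delta/L)e^{Lt}$ so as to close up cleanly under the asymmetric hypothesis on~$\lambda$, and choosing $R=R(\cD,(F_k),L)$ large enough to absorb the Riemann-sum error and any lower-order rounding~terms.
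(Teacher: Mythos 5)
Your proposal is essentially the paper's own proof: a Doob decomposition with a stopping time at domain exit/deviation threshold (the paper's $I_{\cD}$ plus the self-bounding induction, cf.\ Lemma~\ref{lem:ext:proof}), the Azuma--Hoeffding inequality in maximal form for the stopped martingale, and then the discrete Gronwall comparison with the ODE solution, with $\sigma$ handling the boundary exactly as you describe. One small numerical fix: with your threshold $\lambda n/2$ Azuma--Hoeffding only gives exponent $n\lambda^2/(32T\beta^2)$ rather than the stated $n\lambda^2/(8T\beta^2)$, so you should take the martingale threshold to be $\lambda n$ (as the paper does); the Gronwall step still closes, since $\bigpar{2\lambda + \delta\min\{T,L^{-1}\} + R/n}e^{LT} \le 3\lambda e^{LT}$ under the assumed lower bound on $\lambda$.
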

\begin{remark}\label{rem:DEM:Glambda}%
The deterministic `Initial condition'~(iii) can be~relaxed: 
the proof shows~$\Pr(\neg\cG_\lambda) \le 2a \cdot e^{-n\lambda^2/(8T\beta^2)}$, 
where~$\cG_\lambda$ is the event that~\eqref{dem:error} holds 
for \emph{all}~$(0,\hat{y}_1, \ldots, \hat{y}_a) \in \cD$ 
with~$\max_{1 \le k \le a} |Y_k(0)- \hat{y}_k n| \le \lambda n$. 
\end{remark}
\noindent 
The surveys~\cite{DEM99,DM} contain numerous examples that 
illustrate how to apply this powerful result 
(some technical extensions of Theorem~\ref{thm:DEM} 
are discussed in Section~\ref{sec:ext}, which, e.g., allow for larger one-step changes). 
We point out that~\eqref{dem:error} only gives a `good' approximation 
as long as~$|y_k(i/n)| \gg 3 e^{L T} \lambda$, 
which for many natural choices of~$\cD$ and~$\sigma$ 
means that the condition~$i/n \leq \sigma$ in~\eqref{dem:error} is not very restrictive;  
see also~Section~\ref{sec:ext:sigma}. 
\begin{remark}\label{rem:DEM:unique}%
Standard results for differential equations (see, e.g.,~\cite[Theorem~11 in Chapter~2.5]{Hurewicz}) 
guarantee that~\eqref{dem:sol} has a unique solution~$(y_k(t))_{1 \le k \le a}$ 
which extends arbitrarily close to the boundary of~$\cD$. 
\end{remark}
\begin{remark}\label{rem:DEM:RT}%
The proof of Theorem~\ref{thm:DEM} in fact works for any choice of~$R \in [1,\infty)$ and~$T \in (0,\infty)$ 
which satisfy~$t \le T$ and $\max_{1 \le k \le a}|F_k(x)| \le R$ for all~$x = (t,y_1, \ldots, y_a) \in \cD$ with~$t \ge 0$. 
\end{remark}


\subsection{Proof}\label{sec:proof} 
The below proof of Theorem~\ref{thm:DEM} essentially mimics the deterministic Gronwall-type argument 
from Section~\ref{sec:WDEM:intuition} in the present random setting 
(this strategy differs slightly from the original proof given by Wormald~\cite{DEM,DEM99},  
and resembles more some earlier arguments of Kurtz~\cite{Kurtz1970,Kurtz1981} for continuous-time Markov chains). 
\begin{proof}[Why is Theorem~\ref{thm:DEM} intuitively true?]%
First we reduce to a deterministic setting: 
combining the Azuma--Hoeffding inequality (Lemma~\ref{lem:AH} below) 
with the `Boundedness hypothesis'~(ii) and the `Trend hypothesis'~(i), it turns out that, 
with sufficiently high probability, for all relevant~$j$ 
the random variables~$Y_k(j)$ approximately satisfy 
\begin{equation}\label{eq:heur:Yk:decomp}
Y_k(j) - Y_k(0) 
\approx  \sum_{0 \le i < j} \E (Y_k(i+1)-Y_k(i) \mid \cF_i) 
= \sum_{0 \le i < j} \Bigsqpar{F_k\Bigpar{\tfrac{i}{n}, \tfrac{Y_1(i)}{n}, \ldots, \tfrac{Y_k(i)}{n}} \pm \delta}.
\end{equation}
Second, the solutions~$y_k(t)$ to the system of differential equations~\eqref{dem:sol} approximately satisfy 
\begin{equation}\label{eq:heur:yk:decomp}
y_k(\tfrac{j}{n})n - y_k(0)n 
=\sum_{0 \le i < j}\underbrace{\bigsqpar{y_k\bigpar{\tfrac{i+1}{n}}-y_k\bigpar{\tfrac{i}{n}}}n}_{\approx y'_k\bigpar{\tfrac{i}{n}}} 
\approx 
\sum_{0 \le i < j} F_k\Bigpar{\tfrac{i}{n}, y_1\bigpar{\tfrac{i}{n}}, \ldots, y_a\bigpar{\tfrac{i}{n}}} .
\end{equation}
Comparing these two expressions analogous to~\eqref{eq:error:int:0}--\eqref{eq:error:int}, 
using that the functions~$F_k$ are $L$-Lipschitz-continuous 
we then bound~$\max_{1 \le k \le a}|Y_k(i)-y_k(\tfrac{i}{n})n|$ 
via a discrete variant of Gronwall's inequality (Lemma~\ref{lem:gronwall} below). 
This intuitively yields the main estimate~\eqref{dem:error} by mimicking the 
arguments from~Section~\ref{sec:WDEM:intuition}
(taking into account~$|Y_k(0)-y_k(0)n| \le \lambda n$, 
the approximation errors in~\eqref{eq:heur:Yk:decomp}--\eqref{eq:heur:yk:decomp}, 
and domain boundary complications). 
\end{proof}
\noindent 
The formal details rely on the following standard inequalities 
(see Appendix~\ref{sec:apx} for simple~proofs). 
\begin{lemma}[Discrete Gronwall's inequality]\label{lem:gronwall}%
Assume that there are~$b,c \ge 0$ and~$a > 0$ such that~$x_j < c + \sum_{0 \le i < j} (a x_i+b)$ for all~$0 \le j \le m$. 
Then~$x_m < \bigpar{c+b\min\{m,a^{-1}\}} e^{am}$. 
\end{lemma}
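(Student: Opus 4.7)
The plan is to iterate the hypothesis to get a closed-form bound, then compare it to the claimed expression, treating the two regimes of the minimum separately.

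First I would introduce the partial sum $S_j := c + \sum_{0 \le i < j}(a x_i + b)$, so that the hypothesis reads $x_j < S_j$ for all $0 \le j \le m$. Since $S_{j+1} - S_j = a x_j + b$ and $x_j < S_j$, this immediately gives the one-step recursion $S_{j+1} < (1+a) S_j + b$, with $S_0 = c$.

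Next I would solve this recursion by the standard trick of absorbing the additive constant: setting $T_j := S_j + b/a$, the inequality becomes $T_{j+1} < (1+a) T_j$, so by induction $T_j < (1+a)^j T_0 = (1+a)^j(c + b/a)$. Using $(1+a)^j \le e^{a j}$ (from $1+a \le e^a$), I obtain
\begin{equation*}
x_m \; < \; S_m \; < \; e^{am}\Bigpar{c + \tfrac{b}{a}} - \tfrac{b}{a} \; = \; c\, e^{am} + \tfrac{b}{a}\bigpar{e^{am}-1}.
\end{equation*}

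It then remains to check that $\tfrac{b}{a}(e^{am}-1) \le b \min\{m, a^{-1}\} e^{am}$. If $m \ge 1/a$, then $\min\{m,a^{-1}\} = a^{-1}$ and the inequality reduces to $e^{am}-1 \le e^{am}$, which is trivial. If $m < 1/a$, then $\min\{m,a^{-1}\} = m$ and I need $e^{am}-1 \le am\, e^{am}$, equivalently $e^{u}(1-u) \le 1$ for $u = am \ge 0$; this follows since the left-hand side equals $1$ at $u=0$ and has derivative $-u e^u \le 0$ (and is trivially $\le 1$ once $u \ge 1$). Combining the two cases gives the claimed bound.

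The only mildly delicate step is the second regime $m < 1/a$, where the bound $e^u - 1 \le u e^u$ is not entirely obvious at first glance; everything else is a routine recursion together with $(1+a)^m \le e^{am}$. I do not foresee any real obstacle beyond this short convexity-type check.
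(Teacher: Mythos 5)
Your proof is correct, and its engine is the same as the paper's: iterate the hypothesis, absorb the additive constant via the shift by $b/a$, and convert $(1+a)^j$ into $e^{aj}$ using $1+a\le e^a$. The difference lies in how the $\min\{m,a^{-1}\}$ arises. The paper first proves the $b=0$ case ($x_m< c e^{am}$) and then applies it twice: once to $x_j+b/a$ with constant $c+b/a$, and once to $x_j$ after absorbing $\sum_{0\le i<j}b\le bm$ into the constant, i.e.\ with constant $c+bm$; the minimum is just the better of these two bounds. You instead derive the single, slightly sharper estimate $x_m< c e^{am}+\tfrac{b}{a}\bigpar{e^{am}-1}$ and then verify $\tfrac{b}{a}(e^{am}-1)\le b\min\{m,a^{-1}\}e^{am}$, where the only nontrivial branch is $e^u-1\le u e^u$; your check via $e^u(1-u)\le 1$ is fine (alternatively $e^u-1=\int_0^u e^s\dd s\le u e^u$). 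Your route buys a stronger intermediate bound and a single pass through the recursion, while the paper's avoids any calculus-type inequality and reuses the $b=0$ case verbatim. One cosmetic point: at $j=0$ your claim $T_j<(1+a)^jT_0$ is an equality, so state it with $\le$ (or start the induction at $j=1$); the final strict inequality is unaffected because $x_m<S_m$ is already strict.
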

\begin{lemma}[Azuma--Hoeffding inequality]\label{lem:AH}%
Assume that~$(M_i)_{0 \le i \le m}$ are~$\cF_{i}$-measurable random variables 
satisfying~$M_0=0$ as well as~$\E(M_{i+1}-M_{i} \mid  \cF_{i}) = 0$ and~$|M_{i+1}-M_{i}| \le c$ for~$0 \le i < m$.
Then~$\Pr(\max_{0 \le j \le m}|M_j| \ge t) \le 2e^{-t^2/(2 m c^2)}$ for all~$t \ge 0$.  
\end{lemma}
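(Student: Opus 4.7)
The plan is to prove Lemma~\ref{lem:AH} by combining the classical exponential moment (Chernoff) method with a maximal-inequality argument to handle the supremum over~$j$. Since~$(-M_j)_{0 \le j \le m}$ is also a martingale satisfying the hypotheses, a union bound reduces the task to showing
$$\Pr\bigpar{\max_{0 \le j \le m} M_j \ge t} \le e^{-t^2/(2mc^2)};$$
the factor of~$2$ in the stated bound comes from this symmetrization between the upper and lower tails.

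The first ingredient I would establish is the conditional Hoeffding lemma: if a random variable~$X$ satisfies~$\E(X \mid \cF) = 0$ and~$|X| \le c$, then~$\E(e^{\lambda X} \mid \cF) \le e^{\lambda^2 c^2/2}$ for every~$\lambda \in \RR$. This is a short convexity argument: for~$x \in [-c,c]$, convexity of~$e^{\lambda x}$ gives the pointwise interpolation $e^{\lambda x} \le \tfrac{c-x}{2c} e^{-\lambda c} + \tfrac{c+x}{2c} e^{\lambda c}$; taking conditional expectation, the linear term in~$x$ vanishes and one is left with $\cosh(\lambda c)$, which in turn is dominated by~$e^{\lambda^2 c^2/2}$ via a term-by-term comparison of the power series (using~$(2k)! \ge 2^k k!$).

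Next, for each fixed~$\lambda > 0$ the process~$Z_j := e^{\lambda M_j}$ is a nonnegative submartingale (the conditional expectation increases by a factor at most~$e^{\lambda^2 c^2/2}$ at each step, so in particular~$Z_j$ is a submartingale). Doob's maximal inequality for nonnegative submartingales then yields
$$\Pr\bigpar{\max_{0 \le j \le m} M_j \ge t} \;=\; \Pr\bigpar{\max_{0 \le j \le m} Z_j \ge e^{\lambda t}} \;\le\; e^{-\lambda t}\, \E Z_m.$$
Iterating the tower property together with the Hoeffding bound on each increment gives~$\E Z_m \le e^{m \lambda^2 c^2/2}$, and optimizing the resulting exponent~$-\lambda t + m\lambda^2 c^2/2$ at~$\lambda = t/(mc^2)$ yields the desired~$e^{-t^2/(2mc^2)}$.

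The main technical point is the handling of the maximum over~$j$. If one prefers to avoid invoking Doob's maximal inequality directly, the equivalent route is a stopping-time argument: one verifies that~$W_j := e^{\lambda M_j - j\lambda^2 c^2/2}$ is a nonnegative supermartingale with~$\E W_0 = 1$, and applies optional stopping at~$\tau := \min\{j : M_j \ge t\} \wedge m$, using that on~$\{\max_j M_j \ge t\}$ one has~$W_\tau \ge e^{\lambda t - m\lambda^2 c^2/2}$; Markov's inequality then delivers the same Chernoff-type bound. Either approach requires only elementary ingredients and should fit comfortably into the appendix.
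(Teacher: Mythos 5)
Your proposal is correct and follows essentially the same route as the paper: the conditional Hoeffding lemma via convexity and the $\cosh$ comparison, the Chernoff bound with $\lambda = t/(mc^2)$, symmetrization via $-M_j$ for the factor $2$, and a stopped process to control the maximum. The only cosmetic difference is that the paper implements your stopping-time alternative by hand, freezing the process with indicators ($S_i := \indic{\neg\cE_{i-1}}M_i + \indic{\cE_{i-1}}S_{i-1}$, i.e.\ $M_{i\wedge\tau}$ written out explicitly) so as to avoid citing Doob's maximal inequality or optional stopping, in keeping with the note's stated aim of avoiding martingale machinery.
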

\begin{proof}[Proof of Theorem~\ref{thm:DEM}]%
We start with the \emph{analytic part of the argument}. 
Since the functions~$F_k$ are~$L$-Lipschitz on the bounded domain~$\cD$, 
it routinely follows 
that there exist~$T \in (0,\infty)$ and~$R\in [1,\infty)$ satisfying~$t \le T$ and~$\max_{1 \le k \le a}|F_k(x)| \le R$ for all~$x=(t,y_1, \ldots, y_a) \in \cD$. 
By~\eqref{dem:sol} and Remark~\ref{rem:DEM:unique}, 
in the upcoming arguments we thus (for all~$(0,\hat{y}_1, \ldots, \hat{y}_a) \in \cD$) 
always have~$\sigma \in [0,T]$, and~$\max_{1 \le k \le a}|y'_k(t)| \le R$ for all~$t \in [0,\sigma]$.

The next \emph{probabilistic part of the argument} is based on the Azuma--Hoeffding inequality 
(and a Doob decomposition). 
Define~$I_{\cD}$ as the minimum of~$\floor{Tn}$ and the smallest integer~$i \ge 0$ where~$(i/n,Y_1(i)/n,...,Y_a(i)/n) \not\in \cD$ holds. 
Set~$\Delta Y_{k}(i) := \indic{i < I_{\cD}}[Y_{k}(i+1)-Y_{k}(i)]$ 
and~$M_{k}(j) := \sum_{0 \le i < j} \bigsqpar{\Delta Y_{k}(i) - \E(\Delta Y_{k}(i) \mid \cF_{i})}$. 
Since the event~$\{i < I_{\cD}\}$ is $\cF_{i}$-measurable 
(determined by all information of the first~$i$ steps),
we have 
\begin{equation}\label{eq:MD}
Y_{k}(j) = M_{k}(j) + Y_{k}(0) + \sum_{0 \le i < j} \E(Y_{k}(i+1)-Y_{k}(i)  \mid \cF_{i}) \qquad \text{for all~$0 \le j \le I_{\cD}$.}
\end{equation}
Furthermore, for all $i \ge 0$, 
the `tower property' of conditional expectations 
implies~$\E(M_{k}(i+1)-M_{k}(i) \mid \cF_{i}) = \allowbreak \indic{i < I_{\cD}}\E\bigpar{Y_{k}(i+1)-Y_{k}(i) - \E(Y_{k}(i+1)-Y_{k}(i) \mid \cF_{i})\big| \cF_{i}} \allowbreak = 0$, 
and the `Boundedness hypothesis'~(ii) implies~$|M_{k}(i+1)-M_{k}(i)| \le 2\beta$. 
Defining~$\cM$ as the event that~$\max_{0 \le j \le I_{\cD}}|M_{k}(j)| < \lambda n$ for all~$1 \le k \le a$, 
the Azuma--Hoeffding inequality (Lemma~\ref{lem:AH} with~$m:=\floor{Tn}$) thus 
yields~$\Pr(\neg \cM) \le a \cdot 2e^{-n\lambda^2/(8T\beta^2)}$.


The final \emph{deterministic part of the argument} is based on a discrete variant of Gronwall's inequality (and induction). 
Assuming that the event~$\cM$ holds, for all~$(0,\hat{y}_1, \ldots, \hat{y}_a) \in \cD$ 
satisfying~$\max_{1 \le k \le a} |Y_k(0)- \hat{y}_k n| \le \lambda n$ (see Remark~\ref{rem:DEM:Glambda}) 
it remains to prove by induction that, for all integers~$0 \le m \le \sigma n$, we~have 
\begin{equation}\label{eq:DEM:error}
\max_{1 \le k \le a} \bigabs{Y_{k}(m)- y_k\bigpar{\tfrac{m}{n}}n} \; < \; 3 \lambda n e^{LT}.
\end{equation}
The base case~$m=0$ holds since~$\max_{1 \le k \le a} |Y_k(0)- \hat{y}_k n| \le \lambda n$ by assumption and~$\hat{y}_k=y_k(0)$ by~\eqref{dem:sol}.
Turning to the induction step~$1 \le m \le \sigma n$, note that~$m-1 < \floor{\sigma n} \le \floor{Tn}$ by the analytic part. 
So, by choice of~$\sigma$ the induction hypothesis implies~$m-1 < I_{\cD}$ and thus~$m \le I_{\cD}$. 
Fix~$0 \le j \le m$. Writing~$y_k(j/n)$ as the sum~of consecutive differences as in~\eqref{eq:heur:yk:decomp},
by combining~\eqref{eq:MD} and the event~$\cM$  %
with~$|Y_k(0)- y_k(0) n| \le \lambda n$~and 
the~`Trend hypothesis'~(i), it then follows 
(using~$j \le m \le I_{\cD}$, $(i+1)/n \le m/n \le \sigma$, and the mean value~theorem)~that 
\begin{align}
\nonumber
\bigabs{Y_{k}(j)- y_k\bigpar{\tfrac{j}{n}}n} & \le |M_{k}(j)| +  |Y_k(0)- y_k(0) n|  + \sum_{0 \le i < j} \Bigabs{\E\bigpar{Y_{k}(i+1)-Y_{k}(i)  \mid \cF_{i}} -\bigsqpar{y_k\bigpar{\tfrac{i+1}{n}}-y_k\bigpar{\tfrac{i}{n}}}n} \\
\label{eq:dem:rec}
& < 2\lambda n + \sum_{0 \le i < j} 
\biggpar{\sup_{\xi \in (\frac{i}{n},\frac{i+1}{n})}\underbrace{\Bigabs{F_k\bigpar{\tfrac{i}{n},\tfrac{Y_1(i)}{n},\ldots,\tfrac{Y_k(i)}{n}} - y'_k(\xi)}}_{=: \Lambda_k(i,\xi)}+\delta} .
\end{align}
Let~$\oY(i)/n:=(Y_1(i)/n, \ldots, Y_a(i)/n)$ and~$\oy(t):=(y_1(t), \ldots, y_a(t))$. 
Since~$y_k'(\xi)=F_k(\xi,\oy(\xi))$ by~\eqref{dem:sol},   
and~$F_k$ is~$L$-Lipschitz on~$\cD$, 
using~$i/n < \xi < \sigma$ and~$|i/n-\xi| \le 1/n$ it follows that~$\Lambda_k(i,\xi)$ from~\eqref{eq:dem:rec}~satisfies 
\begin{equation*}
\begin{split}
\Lambda_k(i,\xi) & \le \Bigabs{F_k\bigpar{\tfrac{i}{n},\tfrac{\oY(i)}{n}} - F_k\bigpar{\tfrac{i}{n},\oy\bigpar{\tfrac{i}{n}}}} + \Bigabs{F_k\bigpar{\tfrac{i}{n},\oy\bigpar{\tfrac{i}{n}}}-F_k\bigpar{\xi,\oy\bigpar{\xi}}}\\
& \le L \cdot \max_{1 \le k \le a} |Y_k(i)/n- y_k(i/n)| + L \cdot \max\Bigset{\tfrac{1}{n}, \; \max_{1 \le k \le a}\bigabs{y_k\bigpar{\tfrac{i}{n}}-y_k(\xi)}} .
\end{split}
\end{equation*}
Since~$|y_k(i/n)-y_k(\xi)| \le \int_{i/n}^{\xi}|y'_k(t)|\dd t \le R/n$, 
in view of~$R \ge 1$ it follows, for all~$0 \le j \le m$, that 
\begin{equation}\label{eq:dem:GW}
\max_{1 \le k \le a} \bigabs{Y_{k}(j)- y_k\bigpar{\tfrac{j}{n}}n} \; < \; 2\lambda n + \sum_{0 \le i < j} \Bigsqpar{\tfrac{L}{n} \cdot \max_{1 \le k \le a} \bigabs{Y_{k}(i)- y_k\bigpar{\tfrac{i}{n}}n} + \Bigpar{\tfrac{L}{n}\cdot R +\delta}} . 
\end{equation} 
Recalling~$m \le \sigma n \le Tn$, the discrete Gronwall's inequality (Lemma~\ref{lem:gronwall}) thus yields
\begin{equation}\label{eq:dem:GW:ineq}
\max_{1 \le k \le a} \bigabs{Y_{k}(m)- y_k\bigpar{\tfrac{m}{n}}n} < \Bigpar{2\lambda n  + \bigsqpar{R + \delta \min\{Tn,n/L\}}} e^{Lm/n} \le 3 \lambda n e^{LT}
\end{equation}
by the assumed lower bound on~$\lambda$, completing the inductive proof of inequality~\eqref{eq:DEM:error}.
\end{proof}

\subsection{Useful extensions and ideas}\label{sec:ext}
One benefit of the simple proof of Theorem~\ref{thm:DEM} 
(with separate analytic, probabilistic and deterministic parts)
is that it can be easily modified. 
We shall illustrate this using a few concrete extensions, 
where the ideas from Lemmas~\ref{lem:ext:side}--\ref{lem:ext:conc} and Example~\ref{ex:domain} 
seem particularly useful (in addition to the event~$\cG_{\lambda}$ from~Remark~\ref{rem:DEM:Glambda}).

\subsubsection{Weakening the conditions}\label{sec:ext:conditions}
As a first example, by inspecting the proof itself or using additional typical properties, 
it suffices to verify conditions~(i) and~(ii) of Theorem~\ref{thm:DEM} under 
helpful  extra assumptions (which can often lead to significantly improved bounds on the parameters~$\delta,\beta$, 
in particular via Lemma~\ref{lem:ext:side} below). 
\begin{lemma}[Exploiting the proof structure]\label{lem:ext:proof}%
In the verification of conditions~(i) and~(ii) we may additionally assume 
that~$i < \floor{Tn}$ and~$\max_{1 \le k \le a} \bigabs{Y_{k}(i)- y_k\bigpar{\tfrac{i}{n}}n} < 3e^{LT}\lambda n$ hold.  
\end{lemma}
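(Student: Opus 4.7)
The plan is to rerun the proof of Theorem~\ref{thm:DEM} with a slightly tightened stopping time that absorbs the extra assumption into the probabilistic setup. Concretely, let~$J$ denote the smallest~$i \ge 0$ such that either~$y_k(i/n)$ is undefined for some~$k$ or~$\max_{1 \le k \le a}\bigabs{Y_k(i) - y_k(i/n)n} \ge 3 e^{LT}\lambda n$, and replace~$I_{\cD}$ everywhere in the proof by~$I' := \min\{I_{\cD},J\}$. Correspondingly redefine~$\Delta Y_{k}(i) := \indic{i < I'}[Y_{k}(i+1)-Y_{k}(i)]$ and~$M_{k}(j) := \sum_{0 \le i < j} \bigsqpar{\Delta Y_k(i) - \E(\Delta Y_k(i)\mid \cF_i)}$.

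With this modification, whenever~$i < I'$ we automatically have all three conditions~$i < \floor{Tn}$, $(i/n,Y_1(i)/n,\ldots,Y_a(i)/n) \in \cD$, and~$\max_{1 \le k \le a}\bigabs{Y_k(i)-y_k(i/n)n} < 3e^{LT}\lambda n$. This is precisely the regime in which the weakened hypotheses~(i) and~(ii) are being assumed to hold, so we retain both~$|\Delta Y_k(i)| \le \beta$ (giving~$|M_k(i+1)-M_k(i)|\le 2\beta$) and the Trend hypothesis estimate at every step~$i < I'$. The probabilistic part of the proof then goes through verbatim, yielding~$\Pr(\neg\cM) \le 2a\cdot e^{-n\lambda^2/(8T\beta^2)}$ for the event~$\cM$ defined relative to the new martingales.

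For the deterministic induction leading to~\eqref{eq:DEM:error} it remains to check that the step~$1 \le m \le \sigma n$ still goes through with~$I'$ in place of~$I_{\cD}$, i.e., that~$m-1 < I'$. The induction hypothesis at~$i = m-1$ gives~$m-1 < J$ directly, and the original argument (based on the~$\ell^{\infty}$-distance property of~$\sigma$) still yields~$m-1 < I_{\cD}$, so~$m-1 < I'$ and the remainder~\eqref{eq:dem:rec}--\eqref{eq:dem:GW:ineq} is unchanged. The only mild obstacle is making~$J$ well-posed: this is why we let~$J$ also stop as soon as~$y_k$ ceases to exist, which by Remark~\ref{rem:DEM:unique} together with the~$3e^{LT}\lambda$-distance property of~$\sigma$ never happens on~$[0,\sigma]$~--- precisely the range the induction uses.
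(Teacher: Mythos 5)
Your proposal is correct and follows essentially the same route as the paper: the paper simply redefines~$I_{\cD}$ to also stop as soon as~$\max_{1\le k\le a}|Y_k(i)-y_k(i/n)n| \ge 3e^{LT}\lambda n$, which is exactly your~$I'=\min\{I_{\cD},J\}$, and then observes (as you do) that the probabilistic part is unaffected and the deterministic induction still yields~$m-1<I_{\cD}\le\floor{Tn}$ from the induction hypothesis. Your extra clause letting~$J$ stop where~$y_k$ ceases to exist is just a minor well-posedness refinement of the same argument.
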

\begin{proof}%
In the probabilistic~part of the argument we redefine~$I_{\cD}$ 
as the minimum of~$\floor{Tn}$ and the smallest integer~$i \ge 0$ where~$(i/n,Y_1(i)/n,...,Y_a(i)/n) \not\in \cD$ 
or~$\max_{1 \le k \le a} \bigabs{Y_{k}(i)- y_k\bigpar{\tfrac{i}{n}}n} \ge 3e^{LT}\lambda n$ holds. 
The deterministic~part 
then carries over, since 
the induction hypothesis again ensures~$m-1 < I_{\cD} \le \floor{Tn}$. 
\end{proof}
\begin{lemma}[Using additional events]\label{lem:ext:side}%
Given~$\cF_i$--measurable events~$(\cE_i)_{0 \le i < I}$, assume that we only verify conditions~(i) and~(ii) under the additional assumption that~$\cE_i$ holds. 
After replacing~$0 \le i \le \sigma n$ in~\eqref{dem:error} with~$0 \le i \le \min\{\sigma n,I\}$, 
then~$\Pr(\neg\cG_\lambda \cap \cE) \le 2a \cdot e^{-n\lambda^2/(8T \beta^2)}$ holds for any event~$\cE \subseteq \bigcap_{0 \le i < I} \cE_i$.   
\end{lemma}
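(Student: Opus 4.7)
The plan is to re-run the proof of Theorem~\ref{thm:DEM} almost verbatim, with a single surgical modification to the stopping index~$I_\cD$ from the probabilistic part of the argument. Specifically, I would redefine~$I_\cD$ as the minimum of~$\floor{Tn}$, of~$I$, of the smallest~$i \ge 0$ with~$(i/n,Y_1(i)/n,\ldots,Y_a(i)/n)\notin\cD$, and of the smallest~$i \ge 0$ with~$\neg\cE_i$. Because each~$\cE_i$ is~$\cF_i$-measurable, the event~$\{i<I_\cD\}$ is still~$\cF_i$-measurable, so the Doob decomposition~\eqref{eq:MD} is unchanged and the tower-property calculation still yields~$\E(M_k(i+1)-M_k(i)\mid\cF_i)=0$. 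Moreover, on~$\{i<I_\cD\}$ the new stopping rule guarantees both~$\cE_i$ and~$(i/n,Y(i)/n)\in\cD$, so the (weakened) hypotheses~(i) and~(ii) are now applicable there by assumption, giving in particular~$|M_k(i+1)-M_k(i)|\le 2\beta$. The Azuma--Hoeffding inequality then yields~$\Pr(\neg\cM)\le 2a\cdot e^{-n\lambda^2/(8T\beta^2)}$ just as before.

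Next I would run the deterministic induction on~$0\le m\le\min\{\sigma n,I\}$ on the event~$\cM\cap\cE$. The point of the containment~$\cE\subseteq\bigcap_{0\le i<I}\cE_i$ is precisely that, on~$\cE$, every~$\cE_i$ with~$i<I$ holds, so the `$\cE_i$~fails' criterion in the new~$I_\cD$ cannot trigger before~$i=I$; consequently~$I_\cD\ge\min\{\floor{Tn},I,\tau_\cD\}$ on~$\cE$, where~$\tau_\cD$ denotes the first exit from~$\cD$. The induction hypothesis combined with the choice of~$\sigma$ gives~$(i/n,Y(i)/n)\in\cD$ for~$i\le m-1$, so~$\tau_\cD>m-1$; together with~$m\le\min\{\sigma n,I\}$ and integrality of~$m$, this gives~$m-1<\min\{\floor{Tn},I,\tau_\cD\}\le I_\cD$, hence~$m\le I_\cD$. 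Conditions~(i) and~(ii) are therefore applicable at steps~$i\le m-1$, and the Gronwall-style estimates~\eqref{eq:dem:rec}--\eqref{eq:dem:GW:ineq} go through verbatim. This shows~$\cM\cap\cE\subseteq\cG_\lambda$ (with~$\cG_\lambda$ redefined using the enlarged range~$0\le i\le\min\{\sigma n,I\}$), which gives the stated probability bound.

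The main obstacle here is purely bookkeeping: one has to confirm that enlarging~$I_\cD$ by two additional stopping criteria preserves both the martingale structure (handled by~$\cF_i$-measurability of~$\cE_i$) and the inductive step (handled by the hypothesis~$\cE\subseteq\bigcap_{0\le i<I}\cE_i$, which prevents premature halting before~$i=I$). Once these two mechanical points are verified, no calculation from the original proof of Theorem~\ref{thm:DEM} needs to be redone.
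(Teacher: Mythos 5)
Your proposal is correct and is essentially the paper's own argument: the paper implements the same idea by truncating the increments with the per-step indicator $\indic{i < I_{\cD} \text{ and } \cE_i}$ (leaving $I_{\cD}$ unchanged and only claiming \eqref{eq:MD} for $j \le \min\{I_{\cD},I\}$ on $\cE$), whereas you fold $I$ and the first failure of some $\cE_i$ into the stopping time itself; on $\cE$ the two devices coincide, and both preserve the $\cF_i$-measurability needed for the martingale property, the $2\beta$ increment bound, the Azuma estimate, and the unchanged Gronwall induction. The only nitpick is cosmetic: replacing $\sigma n$ by $\min\{\sigma n, I\}$ restricts rather than enlarges the range in \eqref{dem:error}, but your induction over $0 \le m \le \min\{\sigma n, I\}$ is exactly what is needed.
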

\begin{proof}%
In the probabilistic part of the argument
 we redefine~$\Delta Y_{k}(i) := \indic{i < I_{\cD} \text{ and } \cE_i}[Y_{k}(i+1)-Y_{k}(i)]$. 
If the event~$\cE$~holds, then the crux is that for all $0 \le j \le \min\{I_{\cD}, I\}$ the key equation~\eqref{eq:MD} remains valid, 
so the rest of the proof carries over essentially unchanged
(assuming that~$\cM \cap \cE$ holds in the deterministic~part). 
\end{proof}

\subsubsection{Dealing with large one-step changes}\label{sec:ext:changes} 
As a second example, by using average one-step bounds or truncation arguments,
we can often handle (via refined error probabilities) much 
larger 
one-step changes in the `Boundedness hypothesis'~(ii) 
of~Theorem~\ref{thm:DEM}.   
\begin{lemma}[Using average one-step bounds]\label{lem:ext:conc}%
Assume that we add the bound~$\E\bigpar{|Y_k(i+1)-Y_k(i)| \mid \cF_i} \le b$ to condition~(ii). 
Then~$\Pr(\neg\cG_\lambda) \le 2a \cdot e^{-\min\{n\lambda^2/(4T \beta b), \: n\lambda/(4\beta)\}}$. 
\end{lemma}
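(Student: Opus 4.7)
The plan is to repeat the proof of Theorem~\ref{thm:DEM} almost verbatim, changing only the probabilistic step: instead of Azuma--Hoeffding, I would apply a Freedman/Bernstein--type martingale tail inequality that also exploits the new conditional first--moment bound in order to control the conditional variance. The analytic part (choice of~$R,T$) and the deterministic part (discrete Gronwall induction inside the event~$\cM$) go through unchanged, so once the improved tail bound on~$\max_j|M_k(j)|$ is in hand, the reduction to~$\cG_\lambda$ via Remark~\ref{rem:DEM:Glambda} is automatic.

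Concretely, I would retain the definitions~$\Delta Y_k(i) := \indic{i < I_{\cD}}[Y_k(i+1)-Y_k(i)]$ and~$M_k(j) := \sum_{0 \le i < j}[\Delta Y_k(i) - \E(\Delta Y_k(i) \mid \cF_i)]$ from the original proof, so that~$(M_k(j))$ remains an~$\cF_j$-martingale with~$|M_k(i+1)-M_k(i)| \le 2\beta$. The new input is a conditional variance estimate: since~$|\Delta Y_k(i)| \le \beta$ always, and (on~$\{i < I_\cD\}$) the added hypothesis gives~$\E(|\Delta Y_k(i)| \mid \cF_i) \le b$, one obtains
\begin{equation*}
\E\bigpar{(M_k(i+1)-M_k(i))^2 \mid \cF_i} \; \le \; \E\bigpar{(\Delta Y_k(i))^2 \mid \cF_i} \; \le \; \beta \cdot \E(|\Delta Y_k(i)| \mid \cF_i) \; \le \; \beta b,
\end{equation*}
so the total predictable quadratic variation up to~$m=\floor{Tn}$ is at most~$Tn\beta b$.

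Applying Freedman's inequality (in its Bernstein form) to both~$M_k$ and~$-M_k$, with increment bound~$c = 2\beta$, variance proxy~$v = Tn\beta b$, and deviation~$t = \lambda n$, then yields
\begin{equation*}
\Pr\Bigpar{\max_{0 \le j \le \floor{Tn}}|M_k(j)| \ge \lambda n} \; \le \; 2\exp\Bigpar{-\tfrac{\lambda^2 n^2}{2Tn\beta b + \tfrac{4}{3}\beta \lambda n}} .
\end{equation*}
Splitting into the regime where the variance term dominates and the regime where the linear term dominates converts this into an exponent of at least~$\min\{n\lambda^2/(4T\beta b),\, 3n\lambda/(8\beta)\}$, which is~$\ge \min\{n\lambda^2/(4T\beta b),\, n\lambda/(4\beta)\}$ since~$3/8 > 1/4$. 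A union bound over~$k \in \{1,\ldots,a\}$ yields~$\Pr(\neg\cM)$, hence~$\Pr(\neg\cG_\lambda)$, of the claimed~size.

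The only delicate point is the constant--tracking at the Freedman step: the worst--case bound~$c=2\beta$ and the~$\tfrac{1}{3}$-factor from the Bernstein form must be shepherded carefully in order to land on the clean constants~$\tfrac{1}{4T\beta b}$ and~$\tfrac{1}{4\beta}$ in the statement. Everything else -- the martingale decomposition~\eqref{eq:MD}, the induction~\eqref{eq:DEM:error}, and the appeal to the discrete Gronwall inequality -- is identical to the proof of Theorem~\ref{thm:DEM}, so no further work is required.
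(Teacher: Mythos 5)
Your proposal matches the paper's proof essentially verbatim: the paper likewise modifies only the probabilistic part, bounds the conditional variance by $\E\bigpar{|Y_k(i+1)-Y_k(i)|^2 \mid \cF_i} \le \beta b$, and invokes a Freedman-type martingale inequality (cited there as~\cite[Lemma~2.2]{TBD}, giving exponent $(\lambda n)^2/(2Tn\beta b + 2\beta\lambda n)$) before the same case-split yields the claimed $\min\{n\lambda^2/(4T\beta b),\, n\lambda/(4\beta)\}$. Your constant-tracking with the Bernstein-form denominator $2Tn\beta b + \tfrac{4}{3}\beta\lambda n$ is fine (indeed slightly stronger), so the argument is correct and takes the same route.
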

%
%
\begin{proof}%
We shall only modify the probabilistic part of the argument, replacing  
the Azuma--Hoeffding inequality by a more advanced (martingale) concentration inequality that can be traced back to Freedman~\cite{F75}. 
Using standard variance properties, 
the crux is that the modified `Boundedness hypothesis'~(ii) implies 
\[
\Var(M_k(i+1)-M_k(i) \mid \cF_i) = \Var(\Delta Y_k(i) \mid \cF_i) 
\le \indic{i < I_{\cD}} \E\bigpar{|Y_k(i+1)-Y_k(i)|^2 \mid \cF_i} \le \beta \cdot b.
\]
Recalling~$M_k(0)=0$, $\E(M_k(i+1)-M_k(i) \mid \cF_i)=0$ and~$|M_k(i+1)-M_k(i)| \le 2\beta$, 
now~\cite[Lemma~2.2]{TBD} routinely yields~$\Pr(\neg\cM) \le a \cdot 2 e^{-(\lambda n)^2/(2Tn \beta b +2\beta\lambda n)}$, 
and the rest of the proof carries~over. 
\end{proof}
\begin{lemma}[Truncating large one-step changes]\label{lem:ext:trunc}%
Assume that we replace condition~(ii) by~$\Pr\bigpar{|Y_k(i+1)-Y_k(i)|> \beta \mid \cF_i} \allowbreak \le \gamma$ and~$\bigabs{Y_k(i+1)-Y_k(i)}\le B$.  
Then, for all~$x \ge 0$ and~$\lambda \ge (\delta+\gamma B) \min\{T,L^{-1}\} \allowbreak + (R+xB)/n$, 
we have~$\Pr(\neg\cG_\lambda) \le 2a \cdot e^{-n\lambda^2/(8T \beta^2)} + a  \cdot \Pr(Z \ge \floor{x+1})$ with~$Z \sim \Bin(\floor{Tn},\gamma)$.  
\end{lemma}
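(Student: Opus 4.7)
The plan is to mimic the proof of Theorem~\ref{thm:DEM} but build the martingale only from the \emph{truncated} one-step increments (so the Azuma--Hoeffding step still uses the small bound~$\beta$), and separately control the effect of the rare large jumps using the new hypothesis $\Pr(|Y_k(i+1)-Y_k(i)|>\beta\mid\cF_i)\le\gamma$.

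In the probabilistic part of the proof I would redefine
$\Delta Y_k(i):=\indic{i<I_{\cD}}\cdot\indic{|Y_k(i+1)-Y_k(i)|\le\beta}\cdot[Y_k(i+1)-Y_k(i)]$,
keeping $M_k(j):=\sum_{0\le i<j}[\Delta Y_k(i)-\E(\Delta Y_k(i)\mid\cF_i)]$. By construction $|M_k(i+1)-M_k(i)|\le 2\beta$, so Lemma~\ref{lem:AH} again yields $\Pr(\neg\cM)\le 2a\cdot e^{-n\lambda^2/(8T\beta^2)}$. Writing $\xi_k(i):=\indic{|Y_k(i+1)-Y_k(i)|>\beta}$ and $L_k(i):=\xi_k(i)(Y_k(i+1)-Y_k(i))$, the hypothesis $\E(\xi_k(i)\mid\cF_i)\le\gamma$ together with a standard stochastic-domination argument gives $\Pr\bigpar{\sum_{0\le i<\floor{Tn}}\xi_k(i)\ge\floor{x+1}}\le\Pr(Z\ge\floor{x+1})$ with $Z\sim\Bin(\floor{Tn},\gamma)$. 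Defining $\mathcal{N}:=\bigcap_k\{\sum_{0\le i<\floor{Tn}}\xi_k(i)\le x\}$ and taking a union bound yields $\Pr(\neg\mathcal{N})\le a\cdot\Pr(Z\ge\floor{x+1})$.

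For the deterministic part, the identity (valid for $j\le I_{\cD}$)
\[
Y_k(j)-Y_k(0)=M_k(j)+\sum_{0\le i<j}\E\bigpar{Y_k(i+1)-Y_k(i)\mid\cF_i}+\sum_{0\le i<j}L_k(i)-\sum_{0\le i<j}\E(L_k(i)\mid\cF_i)
\]
replaces the key equation~\eqref{eq:MD} in the proof of Theorem~\ref{thm:DEM}. On $\cM\cap\mathcal{N}$ we have $|\sum L_k(i)|\le xB$ (a one-shot correction), while $|\E(L_k(i)\mid\cF_i)|\le B\gamma$ holds deterministically; the analogue of~\eqref{eq:dem:rec} thus picks up an additional additive term $xB$ and has $\delta$ effectively replaced by $\delta+B\gamma$. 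Since the Lipschitz comparison of $F_k$ with $y_k'$ is unchanged, the discrete Gronwall inequality (Lemma~\ref{lem:gronwall}) yields
\[
\max_{1\le k\le a}\bigabs{Y_k(m)-y_k(m/n)n}<\bigpar{2\lambda n+xB+R+(\delta+B\gamma)\min\{Tn,n/L\}}e^{LT}\le 3\lambda n e^{LT}
\]
using the assumed lower bound on~$\lambda$, and combining with the two probability bounds completes the~proof.

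The main obstacle is the accounting for the compensator $\sum\E(L_k(i)\mid\cF_i)$: on $\mathcal{N}$ the raw sum $\sum L_k(i)$ contributes a single additive term of size $xB$, but the compensator is only deterministically bounded by $B\gamma\cdot\floor{Tn}$ and must instead be folded into the per-step trend error as an extra $B\gamma$. Matching these two distinct corrections cleanly against the stated lower bound $\lambda\ge(\delta+\gamma B)\min\{T,L^{-1}\}+(R+xB)/n$ is the delicate bookkeeping~step.
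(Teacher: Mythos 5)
Your proof is correct and follows essentially the same route as the paper: truncate the increments at $\beta$, apply Azuma--Hoeffding to the truncated martingale, bound the number of large jumps via stochastic domination by $Z\sim\Bin(\floor{Tn},\gamma)$, and absorb the resulting corrections $xB$ (raw jumps) and $\gamma B$ (compensator, per step) using the strengthened lower bound on~$\lambda$. The only small adjustment is to count the large jumps only for $i<I_{\cD}$ (as the paper does) rather than all $i<\floor{Tn}$, since the hypothesis $\Pr\bigpar{|Y_k(i+1)-Y_k(i)|>\beta\mid\cF_i}\le\gamma$ is only assumed while $(i/n,Y_1(i)/n,\ldots,Y_a(i)/n)\in\cD$; this restricted count is all your decomposition uses anyway, because it only involves $\sum_{0\le i<j}L_k(i)$ for $j\le I_{\cD}$.
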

\begin{remark}%
Note that~$\Pr(Z \ge 1) \le Tn \gamma$, and~$\Pr(Z \ge \floor{x+1}) \le (eTn\gamma/\ceil{x})^{\ceil{x}}$ for~$x>0$. 
\end{remark}
\begin{proof}%
Writing~$Z_k$ for the number of~$0 \le i < I_{\cD}$ with~$|Y_k(i+1)-Y_k(i)| > \beta$, 
using the modified `Boundedness hypothesis'~(ii) it easily follows that~$Z_k$ is stochastically dominated by~$Z$. 
Defining $\cB_x := \allowbreak \{\max_{1 \le k \le a} Z'_k \le x\}$, 
we infer~$\Pr(\neg\cB_x) \le a \cdot \Pr(Z \ge \floor{x+1})$. 
In the probabilistic part of the argument 
we redefine~$\Delta Y_{k}(i) := \allowbreak \indic{i < I_{\cD} \text{ and } |Y_k(i+1)-Y_k(i)| \le \beta}[Y_{k}(i+1)-Y_{k}(i)]$. 
If~$\cB_x$ holds, 
then it is not difficult to see that for all~$0 \le j \le I_{\cD}$ 
the first key equation~\eqref{eq:MD} remains valid with 
(a)~$Y_{k}(0)$~replaced by~$Y_{k}(0) \pm xB$, 
and (b)~$\E(Y_{k}(i+1)-Y_{k}(i)  \mid \cF_{i})$ replaced by~$\E(Y_{k}(i+1)-Y_{k}(i)  \mid \cF_{i}) \pm \gamma B$; the rest of the probabilistic part routinely carries over. 
In the deterministic part of the argument (where we assume that~$\cM \cap \cB_x$ holds)  
now the second key equation~\eqref{eq:dem:GW} remains valid with 
(a)~$2 \lambda n$~replaced by~$2 \lambda n+xB$ and (b)~$\delta$~replaced by~$\delta+\gamma B$, 
and the increased lower bound on~$\lambda$ then allows us to absorb the corresponding 
extra terms in~\eqref{eq:dem:GW:ineq} into the final upper bound~$3\lambda n e^{LT}$. 
\end{proof}

\subsubsection{Choice of the parameter~$\sigma$}\label{sec:ext:sigma} 
Finally, to understand the somewhat technical parameter~$\sigma$ from Theorem~\ref{thm:DEM}, 
one usually needs to determine which domain constraint is `nearly violated' 
when the solution comes `too close' to the boundary of the domain~$\cD$. 
In many routine applications a natural choice of~$\cD$ can ensure that 
no function~$y_k(t)$ comes close to the boundary, 
in which case~$\sigma$ is solely determined by the time-variable~$t$. 
In the verification of such claims, 
instead of arguing directly about the solutions~$y_k(t)$ to the differential equations,  
here it often is convenient to `turn the differential equation method on its head' 
and use properties of the random variables~$Y_k(i)$ to derive properties of the deterministic functions~$y_k(t)$. 
We now formalize and illustrate this useful idea.%
%
\begin{lemma}[Relating random variables and differential equations]\label{lem:ext:connection}%
Assume that~$\lambda=o(1)$ as~$n \to \infty$. 
If there are constants~$A_k,B_k$ such that~$Y_k(i)/n \in [A_k,B_k]$ holds for all~$0 \le i \le \sigma n$, 
then the conclusion~\eqref{dem:error} of the differential equation method implies~$y_k(t) \in [A_k,B_k]$ for all~$t \in [0,\sigma]$. 
\end{lemma}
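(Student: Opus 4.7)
The plan is to transfer the hypothesized pointwise containment of $Y_k(i)/n$ to a pointwise containment of $y_k(t)$ via a two-step discrete-to-continuous approximation: first approximate $t$ by a nearby grid point $i/n$ (using that $y_k$ has bounded derivative), then compare $y_k(i/n)$ with $Y_k(i)/n$ using the main estimate \eqref{dem:error}. The hypothesis will then pin down $y_k(t)$ to an $o(1)$-neighbourhood of $[A_k,B_k]$, and letting $n \to \infty$ will close the gap, since $y_k$ is deterministic and (for fixed $\hat{y}_k$) independent of~$n$.

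Concretely, for any fixed $t \in [0,\sigma]$ and $1 \le k \le a$, I would set $i := \lfloor tn \rfloor$, so that $0 \le i \le \sigma n$ and $|t - i/n| \le 1/n$. From the analytic part of the proof of Theorem~\ref{thm:DEM} (cf.\ Remark~\ref{rem:DEM:RT}), $|y'_k(s)| \le R$ throughout $[0,\sigma]$, so integrating gives $|y_k(t) - y_k(i/n)| \le R/n$. On the positive-probability event where \eqref{dem:error} holds we also have $|Y_k(i)/n - y_k(i/n)| < 3 e^{LT} \lambda$, which combined with the hypothesis $Y_k(i)/n \in [A_k,B_k]$ yields
\[
y_k(t) \in \bigl[A_k - 3 e^{LT} \lambda - R/n,\; B_k + 3 e^{LT} \lambda + R/n\bigr].
\]
Since $y_k(t)$ is deterministic (depending only on the $\hat{y}_k$ and the $F_k$, not on the realization of the process), this inclusion must hold unconditionally. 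Letting $n \to \infty$ with $\lambda = o(1)$ and $R/n \to 0$ then gives $y_k(t) \in [A_k,B_k]$, as desired.

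The argument is essentially routine and I do not foresee a serious obstacle; the only point worth flagging is conceptual rather than technical. Namely, although both the hypothesized bound on~$Y_k$ and the estimate \eqref{dem:error} a priori live in the random world, their joint occurrence has positive probability (by Theorem~\ref{thm:DEM}, under the standing assumption $n\lambda^2/\beta^2 \to \infty$ which is implicit in the method applying in the first place), and the bound on the deterministic quantity $y_k(t)$ extracted on that event therefore holds without any probabilistic caveat, which is what legitimises taking $n \to \infty$ at the end.
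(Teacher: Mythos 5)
Your proof is correct and follows essentially the same route as the paper: compare $y_k(t)$ with the nearby grid value $y_k(i/n)$, use \eqref{dem:error} together with the hypothesis $Y_k(i)/n \in [A_k,B_k]$ to trap $y_k(t)$ in an $o(1)$-neighbourhood of $[A_k,B_k]$, and let $n\to\infty$ with $\lambda=o(1)$. The paper invokes plain continuity of $y_k$ where you use the quantitative bound $|y_k'|\le R$, and it omits your (correct, but unnecessary-to-state) positive-probability remark since the lemma treats \eqref{dem:error} as a given hypothesis; these are cosmetic differences only.
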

\begin{proof}
Note that~\eqref{dem:error} implies~$y_k(i/n) \in (A_k - 3e^{LT}\lambda, B_k + 3e^{LT}\lambda)$ for all~$0 \le i \le \sigma n$. 
Combining continuity of the (differentiable) functions~$y_k$ with~$1/n=o(1)$ and~$\lambda=o(1)$,  
for any~$\eps>0$ it follows that~$y_k(t) \in (A_k-\eps,B_k+\eps)$ for all~$t \in [0,\sigma]$, completing the proof. 
\end{proof}
\begin{example}[Choice of~$\sigma$]\label{ex:domain} 
Suppose that there are constants~$A_k,B_k$ such that~$Y_k(i)/n \in [A_k,B_k]$ holds for all~$0 \le i \le \sigma n$. 
Fixing~$\sigma \ge 0$ and small~$\eps>0$, suppose that the~$F_k$ are $L$-Lipschitz 
on the domain~$\cD=\cD_{\eps}$ which contains all~$(t, y_1, \ldots, y_a) \in \RR^{a+1}$ 
satisfying $t \in (-\eps,\sigma+\eps)$ and $y_k \in (A_k-\eps,B_k+\eps)$. 
The natural assumption~$\lambda=o(1)$ as~$n \to \infty$ then ensures via Lemma~\ref{lem:ext:connection} that 
the conclusion~\eqref{dem:error} of the differential equation method 
implies~$y_k(t) \in [A_k,B_k]$ for all~$t \in [0,\sigma]$.
Hence no~$y_k(t)$ can come~$3e^{LT}\lambda = o(\eps)$ close to the boundary for~$t \in [0,\sigma]$, 
which shows that~$\sigma$ is a valid choice in Theorem~\ref{thm:DEM} 
(since~$\sigma+3e^{LT}\lambda < \sigma + \eps/2$, say). 
\end{example}
\begin{remark}[Using additional events]\label{rem:ext:side}%
In the setting of Lemma~\ref{lem:ext:side} 
one can of course again argue about the parameter~$\sigma$ and the range of the functions~$y_k(t)$ as in Example~\ref{ex:domain} and Lemma~\ref{lem:ext:connection} above, 
provided that the additional event~$\cE$ implies the relevant bounds~$Y_k(i)/n \in [A_k,B_k]$ for all~$0 \le i \le \sigma n$, say. 
\end{remark}

\section{Concluding remarks}\label{sec:final}
In this note we have given a conceptually simple proof of 
Wormald's differential equation method that 
might be suitable for teaching in class 
(we tried to keep the entry-level low by avoiding `martingale~jargon'). 
Our slightly stronger conclusion is also useful for applications requiring small approximation errors, see~\cite{RWapbsr}.

We believe that the differential equations perspective taken in Section~\ref{sec:WDEM:intuition} 
facilitates the development of new proof approaches. 
Indeed, inequalities developed in that deterministic toy setting can sometimes be lifted to the 
random setting by adding martingale error terms to the argument, as exemplified 
by Section~\ref{sec:proof} (for~Gronwall's inequality).  
As a further illustration, suppose that we replace the second inequality of~\eqref{eq:zk} 
by the following stronger approximation assumption: 
if~$|z_k(t)-y_k(t)| \le \xi_k(t)$ holds for all $1 \le k \le a$, then 
\begin{align}
\label{eq:zk:better}
\bigabs{z'_k(t)-F_k\bigpar{t,y_1(t), \ldots, y_a(t)}} \le \delta_k(t) .
\end{align}
If, in addition, the integral inequality~$\xi_k(t) \ge \int_0^t \delta_k(s) \dd s + \lambda$ 
holds (a kind of `consistency equation' for the error terms), 
then a comparison argument along the lines of~\eqref{eq:error:int:0}--\eqref{eq:error:int} 
yields the bound 
\[
|z_k(t)-y_k(t)| \le \lambda + \int_0^t \delta_k(s) \dd s \le \xi_k(t) .
\]
Interestingly, it turns out that an adaptation of this idea to the 
discrete random setting naturally leads to an approach that 
is more or less equivalent to the one developed by Bohman~\cite{B2009,BK2010,W2014}.

\bigskip{\noindent\bf Acknowledgements.}  
The proof contained in this note (and its motivation) was presented 
in graduate courses at Cambridge~(2013), Georgia~Tech~(2017), 
and a Fields~Institute summer~school~(2017). 
I~thank Patrick~Bennett, Tom~Bohman, Catherine~Greenhill, 
Tamas~Makai, Oliver~Riordan, Greg~Sorkin, Joel~Spencer, 
and Nick~Wormald for helpful comments and discussions.

\small

\bibliographystyle{plain}

\normalsize

\vspace{-0.5em}
\enlargethispage{\baselineskip}

\begin{appendix}
\section{Appendix: deferred routine proofs}\label{sec:apx}
%
%
\begin{proof}[Proof of Lemma~\ref{lem:gronwall:int}]%
Let~$y(t) := L\int_{0}^{t} x(s) \dd s$. 
Noting~$(y(t) e^{-Lt})' = [y'(t)-Ly(t)]e^{-Lt} = L[x(t)-y(t)]e^{-Lt} \le LC e^{-Lt}$, 
integration gives~$y(t) e^{-Lt} \le -Ce^{-Lt}+C$ and~$x(t) \le C + y(t) \le C e^{Lt}$. 
\end{proof}
\begin{proof}[Proof of Lemma~\ref{lem:gronwall}]%
Assuming~$b=0$, for~$0 \le j \le m$ it is routine to verify by induction that   
\begin{equation*}
x_j < c + \sum_{0 \le i < j} a x_i \le c \Bigl(1+\sum_{0 \le i < j} a (1+a)^i\Bigr) 
= c (1+a)^j,
\end{equation*}
which together with~$1+a \le e^a$ yields~$x_m < c e^{a m}$. 
This special case implies the claimed 
bound, since~$(x_j+b/a) < (c + b/a) + \sum_{0 \le i < j} a (x_i+b/a)$ 
and~$x_j < (c + bm) + \sum_{0 \le i < j} a x_i$ for all~$0 \le j \le m$.
\end{proof}
\begin{proof}[Proof of Lemma~\ref{lem:AH}]%
Define~$\cE_i := \set{\max_{0 \le j \le i}M_j \ge t}$ and~$\lambda := t/(m c^2)$. 
Setting~$S_0:=M_0=0$, define~$S_i := \indic{\neg \cE_{i-1}}M_i + \indic{\cE_{i-1}}S_{i-1}$ 
and~$X_i:=S_{i+1}-S_i = \indic{\neg\cE_{i}}[M_{i+1}-M_{i}]$. 
Since~$x \mapsto e^{\lambda x}$ is a convex function 
and~$\tfrac{1}{2}e^{y}+\tfrac{1}{2}e^{-y} \le e^{y^2/2}$ holds 
(by comparing Taylor series), it follows that 
\begin{equation}\label{eq:AH:exp}
e^{\lambda x} \le \frac{x+c}{2c}e^{\lambda c} + \frac{c-x}{2c}e^{-\lambda c} \le \frac{x}{2c} \Bigpar{e^{\lambda c} - e^{-\lambda c}} + e^{(\lambda c)^2/2} \qquad \text{for~$-c \le x \le c$} .
\end{equation}
We have~$\E(X_i \mid \cF_{i}) = \indic{\neg\cE_{i}}\E(M_{i+1}-M_{i}\mid \cF_{i})=0$ and~$|X_i| \le |M_{i+1}-M_{i}| \le c$, 
so~$\E(e^{\lambda X_i} \mid \cF_{i}) \le e^{(\lambda c)^2/2}$ by~\eqref{eq:AH:exp}.
Iterating~$\E(e^{\lambda S_{i+1}}) = \E(e^{\lambda S_{i}} \E(e^{\lambda X_i} \mid \cF_{i})) \le \E(e^{\lambda S_{i}}) \cdot e^{(\lambda c)^2/2}$ yields~$\E(e^{\lambda S_m}) \le e^{m (\lambda c)^2/2}$. 
Noting that~$\cE_m$ implies~$S_m \ge t$, 
Markov's inequality and~$\lambda=t/(mc^2)$ thus~give 
\begin{equation}\label{eq:AH:pr}
\Pr\Bigpar{\max_{0 \le j \le m}M_j \ge t} = \Pr(\cE_m) \le \Pr(e^{\lambda S_m} \ge e^{\lambda t}) \le \E(e^{\lambda S_m}) e^{-\lambda t} \le e^{m(c\lambda)^2/2-\lambda t} = e^{-t^2/(2m c^2)} .
\end{equation}
Since~$\min_{0 \le j \le m}M_j \le -t$ implies~$\max_{0 \le j \le m}(-M_j) \ge t$, 
now a further application of~\eqref{eq:AH:pr} completes the proof 
(as~$M'_i :=-M_i$ also satisfies~$M'_0=0$, $\E(M'_{i+1}-M'_{i}\mid \cF_{i})=0$, and~$|M'_{i+1}-M'_{i}| \le c$). 
\end{proof}
\end{appendix}

\end{document}